\title[Intersection numbers as mixed volumes]{Intersection numbers as mixed volumes of Newton--Okounkov bodies}
\author{Robert Wilms}
\address{Universit\'e de Caen Normandie, CNRS, LMNO UMR 6139, F-14000 Caen, France}
\email{\href{mailto:robert.wilms87@gmail.com}{robert.wilms87@gmail.com}}
\subjclass[2020]{14M25, 14C17, 52A39}
\date{\today}
\begin{document}
	\numberwithin{equation}{section}
	\newtheorem{Def}{Definition}
	\numberwithin{Def}{section}
	\newtheorem{Rem}[Def]{Remark}
	\newtheorem{Lem}[Def]{Lemma}
	\newtheorem{Que}[Def]{Question}
	\newtheorem{Cor}[Def]{Corollary}
	\newtheorem{Exam}[Def]{Example}
	\newtheorem{Thm}[Def]{Theorem}
	\newtheorem*{clm}{Claim}
	\newtheorem{Pro}[Def]{Proposition}
	\newcommand\gf[2]{\genfrac{}{}{0pt}{}{#1}{#2}}
	
\begin{abstract}
	In this paper we express any intersection number $(L_1\cdot\ldots\cdot L_d)$ of ample line bundles on an irreducible projective variety as the mixed volume $V(\Delta_{Y_\bullet}(L_1),\dots,\Delta_{Y_\bullet}(L_d))$ of their Newton--Okounkov bodies. The admissible flag $Y_\bullet$ of subvarieties is constructed from sections of the line bundles using Bertini's theorem, allowing some flexibility to vary the line bundles after the flag is fixed. The proof relies on the slice formula for Newton--Okounkov bodies and on mixed-volume calculations in convex geometry.
\end{abstract}
\maketitle
\section{Introduction}
Newton--Okounkov bodies have been introduced independently by Lazarsfeld--Mustaţă \cite{LM09} and Kaveh--Khovanskii \cite{KK12} based on ideas by Okounkov \cite{Oko96,Oko03} to study line bundles in algebraic geometry by convex geometric methods.
Throughout this paper, let $X$ be an irreducible projective variety of dimension $d$ defined over an algebraically closed field of any characteristic. The Newton--Okounkov body of a big line bundle $L$ on $X$ is a convex body $\Delta_{Y_{\bullet}}(L)\subseteq\mathbb{R}^{d}$ that encodes a lot of information about $L$, for example its volume. Its construction depends on the choice of an admissible flag $Y_\bullet$ on $X$, that is, a flag
\begin{align*}
	Y_\bullet\colon \qquad X=Y_0\supsetneq Y_1\supsetneq Y_2\supsetneq \dots\supsetneq Y_d=\{\text{pt.}\}
\end{align*}
of irreducible subvarieties such that the point $Y_d$ is regular on $Y_k$ for all $0\le k\le d-1$. By Jow's theorem \cite[Theorem A]{Jow10} and by \cite[Proposition 4.1(i)]{LM09}, two big line bundles $L_1$ and $L_2$ are numerically equivalent if and only if $\Delta_{Y_\bullet}(L_1)=\Delta_{Y_\bullet}(L_2)$ for all admissible flags $Y_\bullet$. This raises the question \cite{Xia21}: How can intersection numbers of line bundles be described in terms of their Newton--Okounkov bodies? In this paper, we answer this question in the case of ample line bundles.
\begin{Thm}\label{thm_main}
	Let $X$ be any irreducible projective variety of dimension $d$ and $L_1,\dots, L_d$ be ample $\mathbb{Q}$-line bundles on $X$. There exists an admissible flag $Y_\bullet$ on $X$ such that
	$$(L_1\cdot\ldots\cdot L_d)=d!V(\Delta_{Y_\bullet}(L_1),\dots,\Delta_{Y_\bullet}(L_d)),$$
	where $(L_1\cdot \ldots \cdot L_d)$ is the intersection number of the line bundles $L_1,\dots, L_d$ and $V(\Delta_{Y_\bullet}(L_1),\dots,\Delta_{Y_\bullet}(L_d))$ is the mixed volume of $\Delta_{Y_\bullet}(L_1),\dots,\Delta_{Y_\bullet}(L_d)$.
\end{Thm}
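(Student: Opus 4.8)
The plan is to build the flag one subvariety at a time out of general sections of (multiples of) the $L_i$, and to prove the identity by induction on $d$, at each step reducing both sides to dimension $d-1$ on the first subvariety $Y_1$. Since both sides of the claimed equality are homogeneous of degree one in each $L_i$ and $\Delta_{Y_\bullet}(mL)=m\Delta_{Y_\bullet}(L)$, I may scale and assume that $L_d$ is a very ample $\mathbb{Z}$-line bundle. Take a sufficiently general section $s_d\in H^0(X,L_d)$ and put $Y_1=\operatorname{div}(s_d)$; by Bertini $Y_1$ is an irreducible reduced subvariety of dimension $d-1$ not contained in $X_{\mathrm{sing}}$, and by Serre vanishing $H^0(X,nL_i)\twoheadrightarrow H^0(Y_1,nL_i|_{Y_1})$ for $n\gg0$. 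The restrictions $L_i|_{Y_1}$ for $i<d$ are again ample $\mathbb{Q}$-line bundles, so the inductive hypothesis produces an admissible flag $Y_1\supsetneq Y_2\supsetneq\dots\supsetneq Y_d$ on $Y_1$; prepending $X$ yields an admissible flag $Y_\bullet$ on $X$ (the point $Y_d$ is regular in each $Y_k$ for $k\ge1$ by induction, and in $X$ because a general point of $Y_1$ lies outside $X_{\mathrm{sing}}$).

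On the intersection-theoretic side the reduction is immediate: since $Y_1\in|L_d|$, the projection formula gives $(L_1\cdots L_d)=(L_1\cdots L_{d-1}\cdot Y_1)=(L_1|_{Y_1}\cdots L_{d-1}|_{Y_1})$, and by induction this equals $(d-1)!\,V\big(\Delta_{Y_\bullet^{(1)}}(L_1|_{Y_1}),\dots,\Delta_{Y_\bullet^{(1)}}(L_{d-1}|_{Y_1})\big)$, where $Y_\bullet^{(1)}$ denotes the induced flag on $Y_1$. On the convex-geometric side I invoke the slice formula: with $x_1$ the coordinate coming from $\operatorname{ord}_{Y_1}$, the slice $\Delta_{Y_\bullet}(L)_{x_1=t}$ is the Newton--Okounkov body on $Y_1$, relative to $Y_\bullet^{(1)}$, of the graded linear series obtained by restricting to $Y_1$ the sections of the multiples of $L-tL_d$. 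For $L=L_d$ this series is complete (ampleness of $L_d$ plus Serre vanishing), so $\Delta_{Y_\bullet}(L_d)$ is exactly the cone $C$ over $B_d:=\Delta_{Y_\bullet^{(1)}}(L_d|_{Y_1})\subseteq\{x_1=0\}$ with apex at $e_1=(1,0,\dots,0)$. For $i<d$, writing $(A_i)_t$ for the slice of $A_i:=\Delta_{Y_\bullet}(L_i)$ at $x_1=t$, one has $(A_i)_t\subseteq\Delta_{Y_\bullet^{(1)}}\big((L_i-tL_d)|_{Y_1}\big)$ (restricted linear series inside the complete one), whence by superadditivity of Newton--Okounkov bodies $(A_i)_t+tB_d\subseteq\Delta_{Y_\bullet^{(1)}}\big((L_i-tL_d)|_{Y_1}\big)+\Delta_{Y_\bullet^{(1)}}\big(tL_d|_{Y_1}\big)\subseteq\Delta_{Y_\bullet^{(1)}}(L_i|_{Y_1})=(A_i)_0$; in words, the slices of each $A_i$ shrink at least as fast as those of the cone $C$.

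Everything then hinges on a purely convex-geometric lemma: let $C\subseteq\mathbb{R}^d$ be the cone over a convex body $B\subseteq\{x_1=0\}$ with apex $e_1$, and let $A_1,\dots,A_{d-1}\subseteq\{x_1\ge0\}$ be convex bodies for which $\{x_1=0\}$ is a supporting hyperplane and whose slices satisfy $(A_i)_t+tB\subseteq(A_i)_0$ for every $t$; then
\[
V(A_1,\dots,A_{d-1},C)=\tfrac1d\,V\big((A_1)_0,\dots,(A_{d-1})_0\big).
\]
I expect the proof of this lemma to be the main obstacle. The case $d=2$ is already instructive: $C$ is a triangle, each $A_i$ is a planar region whose upper boundary descends at least as steeply as the slanted edge of $C$ and whose lower boundary is nondecreasing (both forced by the slice hypothesis), and expanding $2V(A_1,C)=\int h_C\,dS_{A_1}$ as a sum over the edges of $A_1$ one checks that the contributions of the slanted and lower edges telescope against that of the rightmost edge, leaving exactly the length of $(A_1)_0$. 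In general I would either carry out the analogous computation with the mixed surface area measure $S(A_1,\dots,A_{d-1};\cdot)$ or set up a secondary induction on $d$ by slicing the mixed volume along $x_1$. Granting the lemma, $d!\,V\big(\Delta_{Y_\bullet}(L_1),\dots,\Delta_{Y_\bullet}(L_d)\big)=\tfrac{d!}{d}V\big((A_1)_0,\dots,(A_{d-1})_0\big)=(d-1)!\,V\big(\Delta_{Y_\bullet^{(1)}}(L_1|_{Y_1}),\dots,\Delta_{Y_\bullet^{(1)}}(L_{d-1}|_{Y_1})\big)$, which matches the intersection side and closes the induction; the base case $d=1$ is the equality $\Delta_{Y_\bullet}(L_1)=[0,\deg L_1]$. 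Finally, the only properties of the sections cutting out $Y_1,\dots,Y_d$ that are used are Bertini-genericity and surjectivity of certain restriction maps, so the same flag $Y_\bullet$ keeps satisfying the formula after a sufficiently small perturbation of the $L_i$, which is the leeway referred to in the introduction.
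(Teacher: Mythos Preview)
Your approach is essentially the paper's. The flag built from Bertini sections of the $L_i$, the reduction to $Y_1$ via the slice formula together with $\Delta_{Y_\bullet|Y_1}(L)=\Delta_{Y_{1,\bullet}}(L|_{Y_1})$ for ample $L$, and the convex lemma you isolate are exactly the paper's ingredients; your lemma is Lemma~\ref{lem_mixed-volumes} with $r=1$, and the paper proves it by the second route you suggest, integrating $\mathrm{vol}\big((t_1K_1+\dots+t_dK_d)\cap H_\tau\big)$ in $\tau$ and comparing the $t_1\cdots t_d$-coefficients. The one structural difference is the induction: you induct on $d$ with the trivial base case $d=1$, whereas the paper inducts on the parameter $k$ of the stronger Theorem~\ref{thm} and needs \cite{Wil22} for the base case $k=0$. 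Restricted to Theorem~\ref{thm_main} your scheme is therefore more self-contained; on the other hand, the paper's $k=0$ input is exactly what buys the perturbation freedom $L_{k+1},\dots,L_d\in C_L(M)$ that you gesture at in your last sentence---that leeway does not follow from Bertini-genericity alone.
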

The theorem is a special case of the following more general and more explicit result.
\begin{Thm}\label{thm}
	Let $X$ be any irreducible projective variety of dimension $d$ and $L_1,\dots, L_{k}, L$ be ample $\mathbb{Q}$-line bundles on $X$ with $0\le k\le d-1$. There exists an admissible flag $Y_\bullet$ on $X$ with the following property: For any $\mathbb{Q}$-line bundle $M$ on $X$ and any
	$$L_{k+1},\dots,L_d\in C_L(M)=\{\lambda L+\mu M~|~\lambda\in \mathbb{R}, \mu\in\mathbb{R}_{\ge 0}\}\cap \mathrm{Amp}(X),$$
	where $\mathrm{Amp}(X)\subseteq N^1(X)_{\mathbb{R}}$ denotes the ample cone, we have
	\begin{align}\label{equ_mixed-intersection}
	(L_1\cdot \ldots \cdot L_d)=d!V(\Delta_{Y_\bullet}(L_1),\dots,\Delta_{Y_\bullet}(L_d)).
	\end{align}
	Moreover, even if $L$ is not ample, every admissible flag with the properties
	\begin{enumerate}[(i)]
		\item $Y_{j}\subseteq Y_{j-1}$ is a Cartier divisor for every $1\le j\le d-1$.
		\item For every $1\le j\le k$ there exists an $r_{j}\in \mathbb{Q}_{>0}$ such that $r_{j}\mathcal{O}_{Y_{j-1}}(Y_{j})\equiv L_{j}|_{Y_{j-1}}$, where $\equiv$ denotes numerical equivalence of $\mathbb{Q}$-line bundles.
		\item For every $k+1\le j\le d-1$ there is an $r_j\in\mathbb{Q}$ such that $r_j\mathcal{O}_{Y_{j-1}}(Y_j)\equiv L|_{Y_{j-1}}$.
	\end{enumerate} 
	satisfies Equation (\ref{equ_mixed-intersection}) for all $L_{k+1},\dots,L_d\in C_L(M)$ for any $\mathbb{Q}$-line bundle $M$.
\end{Thm}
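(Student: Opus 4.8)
The plan is to prove Theorem~\ref{thm} by induction on $d$, using the slice formula for Newton--Okounkov bodies to peel off the first member $Y_1$ of the flag and a computation in convex geometry to control the mixed volume under this operation; Theorem~\ref{thm_main} is then obtained from Theorem~\ref{thm} with $k=d-1$, $L=L_d$ and $M=L_d$.

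\emph{Construction of the flag.} After clearing denominators we may assume $L_1,\dots,L_k,L$ are line bundles, and after replacing each of them by a fixed multiple — which multiplies both sides of~\eqref{equ_mixed-intersection} by the same constant — we may assume that $m_jL_j|_{Y_{j-1}}$, resp.\ $m_jL|_{Y_{j-1}}$, is very ample for the integers $m_j$ used below. Inductively, having built $X=Y_0\supsetneq\cdots\supsetneq Y_{j-1}$, let $Y_j$ be a general member of $|m_jL_j|_{Y_{j-1}}|$ when $1\le j\le k$ and of $|m_jL|_{Y_{j-1}}|$ when $k+1\le j\le d-1$; since these are very ample systems on the irreducible variety $Y_{j-1}$ of dimension $\ge 2$, Bertini's theorem in the characteristic-free form (a general hyperplane section of an irreducible variety is irreducible and reduced) makes $Y_j$ an integral subvariety on which a regular general point of $Y_{j-1}$ lies, and finally one takes $Y_d$ to be a general, hence regular, point of $Y_{d-1}$. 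This gives an admissible flag with properties~(i)--(iii) and $r_j=1/m_j$; for the ``moreover'' statement one simply starts from an arbitrary flag with~(i)--(iii).

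\emph{The inductive step.} Set $c_j=1/r_j$, so that $\mathcal O_{Y_{j-1}}(Y_j)\equiv c_jL_j|_{Y_{j-1}}$ for $j\le k$ and $\equiv c_jL|_{Y_{j-1}}$ otherwise. Because $Y_1\equiv c_1L_1$ (resp.\ $c_1L$ if $k=0$), the slice formula of Lazarsfeld--Musta\c t\u a identifies the slice $\Delta_{Y_\bullet}(L_i)_{x_1=t}$ with the Newton--Okounkov body on $Y_1$ of the twisted restriction $(L_i-tc_1L_1)|_{Y_1}$; in particular $\Delta_{Y_\bullet}(L_1)$ is a genuine cone over the base $\{0\}\times\Delta_{Y_{\bullet\ge1}}(L_1|_{Y_1})$ with apex at $r_1e_1$. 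Substituting this description into the integral formula for the mixed volume, I would establish a convex-geometric slicing identity expressing $d!\,V(\Delta_{Y_\bullet}(L_1),\dots,\Delta_{Y_\bullet}(L_d))$ through $(d-1)$-dimensional mixed volumes of these slices, and then apply the inductive hypothesis on $Y_1$: the restricted data $L_2|_{Y_1},\dots,L_k|_{Y_1}$, the reference bundle $L|_{Y_1}$, and the extra numerical class $\mathcal O_X(Y_1)|_{Y_1}\equiv c_1L_1|_{Y_1}$ (which for $k=0$ equals $c_1L|_{Y_1}$ and contributes nothing new) satisfy hypotheses of the same shape, while $\langle L,M\rangle|_{Y_1}$ is again a plane through $L|_{Y_1}$, so the $C_L(M)$-freedom is preserved under restriction. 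Running the recursion through all $d-1$ divisorial steps — using at step $j$ the proportionality $Y_j\equiv c_jL_j|_{Y_{j-1}}$ when $j\le k$, and $Y_j\equiv c_jL|_{Y_{j-1}}$ \emph{together with} $L_j\in C_L(M)$ when $k<j<d$ — the accumulated normalisation constants, the projection formula along the flag, and (at the final step, on the curve $Y_{d-1}$) the degree of $L_d|_{Y_{d-1}}$ assemble the answer into $(L_1\cdot\ldots\cdot L_d)$. The base case $d=1$ is immediate: on a curve $\Delta_{Y_\bullet}(D)=[0,\deg D]$ has length $(D)$.

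\emph{The main obstacle.} The crux is precisely the convex-geometric slicing identity when one factor is a cone: one must pin down exactly which Minkowski sums of slices collapse without passing to convex hulls, so that the mixed volume is computed fibrewise — in general $\Delta_{Y_\bullet}$ is only superadditive, and indeed $\Delta_{Y_\bullet}(L_i+L_j)\neq\Delta_{Y_\bullet}(L_i)+\Delta_{Y_\bullet}(L_j)$ can fail, so this collapse must be argued, not assumed. This is where the hypothesis $L_{k+1},\dots,L_d\in C_L(M)$ is genuinely used: it forces the family of slices arising in the recursion to be a two-parameter family adapted to the flag, and it is exactly this constraint that limits the permissible change of line bundles after the flag is fixed to the slice $C_L(M)$. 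A secondary technical point is to guarantee that the slice formula produces the honest Newton--Okounkov body of $(L_i-tc_1L_1)|_{Y_1}$ rather than a smaller ``restricted'' one: this requires $Y_1\not\subseteq\mathbf B_+(L_i-tc_1L_1)$ for all relevant $t$ — which holds for a sufficiently general $Y_1$, there being only finitely many augmented-base-locus chambers — together with asymptotic surjectivity of the restriction maps to $Y_1$ of the big $\mathbb Q$-divisor classes $L_i-tc_1L_1$, which one gets from a comparison of restricted and full volumes along a general divisor.
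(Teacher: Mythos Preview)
Your overall strategy --- peel off $Y_1$ via the slice formula, reduce the $d$-dimensional mixed volume to a $(d-1)$-dimensional one by a convex-geometric identity, then recurse on $Y_1$ --- matches the paper, as does the Bertini construction of the flag. The genuine gap is the case $k=0$, i.e.\ what happens once your first $k$ recursive steps have exhausted the bundles $L_1,\dots,L_k$. Your slicing identity (the paper's Lemma~\ref{lem_mixed-volumes}) needs one of the bodies to be a cone over its $\nu_1=0$ slice with apex on the $x_1$-axis; this holds for $\Delta_{Y_\bullet}(L_1)$ precisely because $r_1\mathcal O_X(Y_1)\equiv L_1$, which requires $k\ge 1$. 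Once $k=0$ the first divisor of the flag is aligned with $L$, not with any of the remaining $L_j\in C_L(M)$, so in general none of the $\Delta_{Y_\bullet}(L_j)$ is a cone and the identity is unavailable. You flag exactly this in your ``main obstacle'' paragraph but do not resolve it: observing that the slices form ``a two-parameter family adapted to the flag'' is a description of the situation, not an argument, and your induction on $d$ with base case $d=1$ never reaches $k=0$ in dimension $\ge 2$.

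The paper closes this gap by inducting on $k$ rather than on $d$, with the base case $k=0$ supplied by the additivity theorem of \cite{Wil22}: for a flag satisfying (i) and (iii) one has $\Delta_{Y_\bullet}(N_1)+\Delta_{Y_\bullet}(N_2)=\Delta_{Y_\bullet}(N_1+N_2)$ for all $N_1,N_2\in C_L(M)$, after which~\eqref{equ_mixed-intersection} follows from multilinearity of both the intersection number and the mixed volume. This is a substantive external input that your proposal neither imports nor reproves, and it is exactly where the hypothesis $L_{k+1},\dots,L_d\in C_L(M)$ is consumed. The inductive step $k\mapsto k-1$ is then your cone reduction, and there only \emph{super}additivity~\eqref{equ_subadditivity} is needed to verify the hypotheses of Lemma~\ref{lem_mixed-volumes} (see Lemma~\ref{lem_NO-bodies}(b)). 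Finally, your ``secondary technical point'' about $\mathbf{B}_+$ and chambers is unnecessary here: all $L_j$ are ample, so~\eqref{equ_restriction-ample} gives $\Delta_{Y_\bullet|Y_1}(L_j)=\Delta_{Y_{1,\bullet}}(L_j|_{Y_1})$ directly.
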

To prove the theorem, we first construct a flag satisfying properties (i)-(iii) by Bertini's theorem. 
Then we show Equation (\ref{equ_mixed-intersection}) by induction on $k$, making use of the slice formula for Newton--Okounkov bodies. The case $k=0$ was proved in \cite{Wil25}. We will also need a lemma about the compatibility of mixed volumes of certain convex bodies with restriction to the hyperplane $\{x_1=0\}$, which we prove in Section \ref{sec_mv}.

A main motivation of Theorem \ref{thm_main} is to translate general inequalities between mixed volumes of convex bodies into inequalities between intersection numbers of nef line bundles. As a first application, we will derive the following corollary.
\begin{Cor}\label{cor}
	Let $X$ be any irreducible projective variety of dimension $d$ and $L, L_1,\dots L_{d}$ be nef $\mathbb{R}$-line bundles on $X$.
	Then $$(L_1\cdot\ldots\cdot L_d)\cdot (L^d)\le d\cdot(L^{d-1}\cdot L_d)\cdot(L_1\cdot\ldots\cdot L_{d-1}\cdot L).$$
\end{Cor}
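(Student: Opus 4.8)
The plan is to obtain Corollary \ref{cor} as a translation, via Theorem \ref{thm}, of the convex-geometric inequality
\[
 V(K_1,\dots,K_d)\cdot V(K,\dots,K)\;\le\; d\cdot V(\underbrace{K,\dots,K}_{d-1},K_d)\cdot V(K_1,\dots,K_{d-1},K)
\]
for convex bodies $K_1,\dots,K_d,K\subseteq\mathbb{R}^d$, supplemented by the superadditivity $\Delta_{Y_\bullet}(A+B)\supseteq\Delta_{Y_\bullet}(A)+\Delta_{Y_\bullet}(B)$ of Newton--Okounkov bodies. First I would reduce to the case that $L,L_1,\dots,L_d$ are ample $\mathbb{Q}$-line bundles: both sides of the asserted inequality are polynomial, hence continuous, functions of the classes $[L],[L_i]\in N^1(X)_{\mathbb{R}}$, and intersection numbers of nef classes are nonnegative, so since ample $\mathbb{Q}$-classes are dense in the nef cone the general case follows by passing to limits.

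Assume now $L,L_1,\dots,L_d$ are ample $\mathbb{Q}$-line bundles. I would apply Theorem \ref{thm} with $k=d-1$, distinguished bundle $L$ and fixed bundles $L_1,\dots,L_{d-1}$, obtaining an admissible flag $Y_\bullet$; taking the parameter bundle $M:=L_d$, the cone $C_L(L_d)=\{\lambda L+\mu L_d\mid\lambda\in\mathbb{R},\,\mu\ge 0\}\cap\mathrm{Amp}(X)$ contains both $L$ (as $\lambda=1,\mu=0$) and $L_d$ (as $\lambda=0,\mu=1$). Hence Theorem \ref{thm}, applied with either of these placed in the free last slot, yields, writing $K_i:=\Delta_{Y_\bullet}(L_i)$ and $K:=\Delta_{Y_\bullet}(L)$,
\[
 (L_1\cdot\ldots\cdot L_d)=d!\,V(K_1,\dots,K_d),\qquad (L_1\cdot\ldots\cdot L_{d-1}\cdot L)=d!\,V(K_1,\dots,K_{d-1},K).
\]
For this (or any) admissible flag one also has $\mathrm{vol}(K)=\tfrac{1}{d!}\mathrm{vol}_X(L)=\tfrac{1}{d!}(L^d)$. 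Finally — and this is what lets me absorb the term $(L^{d-1}\cdot L_d)$, which $Y_\bullet$ is not tailored to compute exactly — superadditivity gives $\Delta_{Y_\bullet}(L+tL_d)\supseteq K+tK_d$ and hence $\mathrm{vol}_X(L+tL_d)\ge d!\,\mathrm{vol}(K+tK_d)$ for all $t\ge 0$; these are polynomials in $t$ taking the same value $(L^d)$ at $t=0$, so comparing first derivatives at $t=0$ yields $(L^{d-1}\cdot L_d)\ge d!\,V(K,\dots,K,K_d)$. Combining these with the convex-geometric inequality applied to $K_1,\dots,K_d,K$:
\begin{align*}
 (L_1\cdot\ldots\cdot L_d)\,(L^d)&=d!\,V(K_1,\dots,K_d)\cdot d!\,V(K,\dots,K)\\
 &\le d\cdot d!\,V(K,\dots,K,K_d)\cdot d!\,V(K_1,\dots,K_{d-1},K)\\
 &\le d\,(L^{d-1}\cdot L_d)\,(L_1\cdot\ldots\cdot L_{d-1}\cdot L).
\end{align*}

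The main obstacle is the convex-geometric inequality with the sharp constant $d$. The constant is optimal: equality holds for $X=(\mathbb{P}^1)^d$ with $L_i$ the pullback of a point from the $i$-th factor and $L=L_1+\dots+L_d$ (the corresponding Newton--Okounkov bodies for a toric flag being $K_i=[0,e_i]$ and $K=[0,1]^d$). Consequently crude containment estimates lose too much, and a naive iteration of Alexandrov--Fenchel, converting the $K_i$ into $K$ one at a time, only produces the constant $2^{d-1}$. For $d=2$ the inequality is immediate from Minkowski's second inequality: with $B(A,A')=V(A,A')$ hyperbolic and $B(K,K)=\mathrm{vol}(K)>0$, writing $\alpha_i=B(K_i,K)/B(K,K)\ge 0$ one has $B(K_1,K_2)B(K,K)=B(K_1,K)B(K_2,K)+B(K,K)\,B(K_1-\alpha_1K,\,K_2-\alpha_2K)$, and Cauchy--Schwarz on the (negative semidefinite) $B(\cdot,K)$-orthogonal complement, together with $\mathrm{vol}(K_i)\ge 0$, bounds the last summand by $\alpha_1\alpha_2B(K,K)^2=B(K_1,K)B(K_2,K)$.

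For general $d$ I would aim to recover exactly the constant $d$ by a more careful organization of this Alexandrov--Fenchel argument, or alternatively by combining the slicing lemma of Section \ref{sec_mv} with induction on $d$ (slicing by $\{x_1=0\}$ after specializing $K_d$ to a segment in the $e_1$-direction reduces the two terms involving $K_d$ to $(d-1)$-dimensional mixed volumes, while the remaining two terms are handled through the slicing lemma); carrying this out is the crux of the proof. Once the convex-geometric inequality is in hand, the geometric part of the argument above is routine.
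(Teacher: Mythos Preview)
Your approach is exactly the paper's: reduce to ample $\mathbb{Q}$-line bundles by continuity, apply Theorem~\ref{thm} with $k=d-1$ to obtain the flag, handle the term $(L^{d-1}\cdot L_d)$ via superadditivity (your derivative-comparison argument is precisely \cite[Lemma~6.1]{Wil22}, which the paper cites), and then invoke the convex-geometric inequality $V(K_1,\dots,K_d)\,\mathrm{vol}(K)\le d\,V(K,\dots,K,K_d)\,V(K_1,\dots,K_{d-1},K)$. The inequality you flag as ``the crux'' and leave open for general $d$ is not something you need to prove: it is \cite[Equation~6.1]{SSZ19} of Saroglou--Soprunov--Zvavitch, which the paper simply cites.
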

We remark that a more general inequality has been proven by Jiang--Li \cite[Theorem 3.5]{JL23} by different methods using multipoint Okounkov bodies.
\section{Newton--Okounkov Bodies}
In this section we review the definition and main properties of Newton--Okounkov bodies. We refer to \cite{LM09} for more details.
Let $Y_\bullet$ be an admissible flag on $X$ and $L$ a line bundle on $X$. There is a valuation-like function
$$\nu_{Y_\bullet}\colon H^0(X,L)\setminus \{0\}\to \mathbb{Z}^d,\qquad s\mapsto \nu_{Y_\bullet}(s)=(\nu_1(s),\dots,\nu_d(s)),$$
which is defined inductively by $\nu_j(s)=\mathrm{ord}_{Y_j}(s_{j-1})$, where $s_0=s$ and $s_j$ is the section of the line bundle $L|_{Y_j}\otimes \bigotimes_{i=1}^j\mathcal{O}_{Y_{i-1}}(-\nu_i(s)Y_i)|_{Y_j}$ on $Y_j$ induced by $s_{j-1}$ for $j\ge 1$. 
The Newton--Okounkov body $\Delta_{Y_\bullet}(L)$ of $L$ is defined by
$$\Delta_{Y_\bullet}(L)=\mathrm{cch}\left(\bigcup_{m\ge 1}\tfrac{1}{m}\nu_{Y_\bullet}\left(H^0(X,L^{\otimes m})\setminus\{0\}\right)\right)\subseteq \left(\mathbb{R}_{\ge 0}\right)^d,$$
where $\mathrm{cch}$ denotes \emph{closed convex hull}. 

Let us recall some facts about Newton--Okounkov bodies. For this purpose, we assume that $L$ is big. By \cite[Theorem 2.3]{LM09} we have the following formula for the volume
\begin{align}\label{equ_volume}
	\mathrm{vol}(\Delta_{Y_\bullet}(L))=\tfrac{1}{d!}\mathrm{vol}(L)=\lim_{m\to\infty}\frac{\dim H^0(X,L^{\otimes m})}{m^d}.
\end{align}
Note that $\mathrm{vol}(L)=(L^d)$ if $L$ is ample.
For any integer $p>0$, we have
\begin{align}\label{equ_scalar}
	\Delta_{Y_\bullet}(L^{\otimes p})=p\cdot \Delta_{Y_\bullet}(L)
\end{align}
as shown in \cite[Proposition 4.1]{LM09}. Hence, the definition of Newton--Okounkov bodies extends canonically to big $\mathbb{Q}$-line bundles. Since this extension is continuous on the cone of big $\mathbb{Q}$-line bundles by \cite[Theorem B]{LM09}, Newton--Okounkov bodies are therefore canonically defined for big $\mathbb{R}$-line bundles. By homogeneity and continuity, Equation (\ref{equ_scalar}) holds true for all $p\in\mathbb{R}_{>0}$ and all big $\mathbb{R}$-line bundles $L$.

By sending two global sections $s_1\in H^0(X,L_1^{\otimes m})$ and $s_2\in H^0(X,L_2^{\otimes m})$ of big line bundles $L_1,L_2$ to $s_1\otimes s_2\in H^0(X,(L_1\otimes L_2)^{\otimes m})$ we get an inclusion
\begin{align}\label{equ_subadditivity}
	\Delta_{Y_\bullet}(L_1)+\Delta_{Y_\bullet}(L_2)\subseteq \Delta_{Y_\bullet}(L_1\otimes L_2),	
\end{align}
where the left-hand side denotes the Minkowski sum of $\Delta_{Y_\bullet}(L_1)$ and $\Delta_{Y_\bullet}(L_2)$. Note again, that by homogeneity and continuity this inclusion holds true for all big $\mathbb{R}$-line bundles $L_1$ and $L_2$.

If $d\ge 1$, we also consider restrictions to $Y_1$. First, let us denote the restricted flag 
$$Y_{1,\bullet}\colon \qquad Y_1\supsetneq Y_2\supsetneq \dots\supsetneq Y_d$$
on the projective variety $Y_1$. We define the restricted Newton--Okounkov body $\Delta_{Y_\bullet|Y_1}(L)$ in $\mathbb{R}^{d-1}$ by
$$\Delta_{Y_\bullet|Y_1}(L)=\mathrm{cch}\left(\bigcup_{m\ge 1}\tfrac{1}{m}\nu_{Y_{1,\bullet}}\left(\mathrm{Im}\left(H^0(X,L^{\otimes m})\to H^0(Y_1,L^{\otimes m}|_{Y_1})\right)\setminus\{0\}\right)\right).$$
If $L$ is ample, then the restriction map is surjective for $m\gg 0$. Hence, we get
\begin{align}\label{equ_restriction-ample}
	\Delta_{Y_\bullet|Y_1}(L)=\Delta_{Y_{1,\bullet}}(L|_{Y_1}).
\end{align}
By homogeneity, this formula is still true for ample $\mathbb{Q}$-line bundles and by continuity it is also true for ample $\mathbb{R}$-line bundles.

Let us recall the slice formula for Newton--Okounkov bodies from \cite[Theorem 4.26]{LM09}. 
We assume that $L$ is an ample $\mathbb{R}$-line bundle and $Y_1\subseteq X$ is a Cartier divisor. We write
$$\mu(L;Y_1)=\sup\{s>0~|~L-s\mathcal{O}_X(Y_1) \text{ is big.}\}.$$
The slice formula asserts that, for $\tau\ge 0$ with $\tau\neq \mu(L;Y_1)$,
\begin{align}\label{equ_slice-formula}
	\Delta_{Y_{\bullet}}(L)_{\nu_1=\tau}=\Delta_{Y_\bullet|Y_1}(L-\tau\mathcal{O}_X(Y_1))
\end{align}
considered as convex bodies in $\mathbb{R}^{d-1}\cong \{\tau\}\times \mathbb{R}^{d-1}$.
Note that for $\tau>\mu(L;Y_1)$ both sides are empty sets.

\section{Mixed volumes}\label{sec_mv}
In this section we prove a lemma about mixed volumes that will be needed in the proof of Theorem \ref{thm}.
For details on mixed volumes of convex bodies, we refer to Schneider's book \cite[Chapter 5]{Sch14}.
For any convex bodies $K_1,\dots,K_n$ in $\mathbb{R}^d$ the volume of the Minkowski sum $t_1K_1+\dots +t_nK_n$ is a homogeneous polynomial of degree $d$ in $t_1,\dots, t_n$ if $t_i\ge 0$ for all $i$. The mixed volumes $V(K_{i_1},\dots,K_{i_d})$ for $(i_1,\dots, i_d)\in \{1,\dots,n\}^d$ are defined by the coefficients of this polynomial, that is,
\begin{align}\label{equ_def_mv}
\mathrm{vol}(t_1K_1+\dots+t_nK_n)=\sum_{i_1,\dots, i_d=1}^n t_{i_1} \cdots t_{i_d}V(K_{i_1},\dots, K_{i_d})
\end{align}
for all $t_1\ge 0,\dots,t_n\ge 0$, where $V(K_{i_1},\dots,K_{i_d})$ is symmetric in $K_{i_1},\dots,K_{i_d}$. It turns out that $V(\cdot,\ldots,\cdot)$ is a non-negative and multilinear function on the cone of convex bodies in $\mathbb{R}^d$ and that $V(K,\dots,K)=\mathrm{vol}(K)$.
For every $\tau\in\mathbb{R}$, let $H_{\tau}$ denote the hyperplane
$$H_\tau=\{(x_1,\dots, x_d)\in\mathbb{R}^d~|~x_1=\tau\}.$$
For any convex body $K\subseteq \mathbb{R}^d$ we write $K_\tau$ for $K\cap H_\tau$ considered as a convex body in $\mathbb{R}^{d-1}\cong H_\tau$, that is, $K\cap H_\tau=\{\tau\}\times K_\tau$. For simplicity, we also call the empty set a convex body.
\begin{Lem}\label{lem_mixed-volumes}
	Let $K_1,\dots, K_d$ be convex bodies in $(\mathbb{R}_{\ge 0})^d$ and $r\in\mathbb{R}_{>0}$. If
	\begin{enumerate}[(a)]
		\item $K_{1,\tau}=\left(1-\tfrac{\tau}{r}\right)K_{1,0}$ for all $\tau\in [0,r)$ and
		\item $\tfrac{\tau}{r}K_{1,0}+K_{j,\tau}\subseteq K_{j,0}$ for all $2\le j\le d$ and all $\tau\ge 0$,
	\end{enumerate}
	then it holds $\frac{d}{r} V(K_1,\dots, K_d)=V(K_{2,0},\dots, K_{d,0})$.
\end{Lem}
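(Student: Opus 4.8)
The plan is to compute $V(K_1,\dots,K_d)$ by slicing along the hyperplanes $H_\tau$ and integrating, exploiting the explicit description of the slices in hypotheses (a) and (b). The key classical fact I would invoke is the "mixed‑volume slicing" formula: if one writes $K_1(t)=tK_1$ and expands $\mathrm{vol}(K_1(t)+t_2K_2+\dots+t_dK_d)$ as an integral over the first coordinate, the $d$‑dimensional volume of a Minkowski sum of bodies contained in $(\mathbb R_{\ge0})^d$ decomposes as $\int_0^\infty \mathrm{vol}_{d-1}\big((t_1K_1+\dots+t_dK_d)_\tau\big)\,d\tau$, and by hypothesis (b) together with (a) each slice $(t_1K_1+\dots+t_dK_d)_\tau$ can be rewritten in terms of the slices at $\tau=0$. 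Concretely, first I would record the elementary observation that for convex bodies $A,B\subseteq(\mathbb R_{\ge0})^d$ one has $(A+B)_\tau \supseteq \bigcup_{\sigma+\sigma'=\tau}(A_\sigma+B_{\sigma'})$ with equality by convexity, and that combined with (b) this yields, for $\tau\in[0,r)$,
\begin{align*}
  (t_1K_1+t_2K_2+\dots+t_dK_d)_\tau = t_1 K_{1,\tau/t_1}+\big(\text{contribution of }K_2,\dots,K_d\big),
\end{align*}
which by (a) and (b) telescopes into $\big(1-\tfrac{\tau}{t_1 r}\big)t_1K_{1,0}+t_2K_{2,0}+\dots+t_dK_{d,0}$ — here I am being deliberately schematic; the point is that (a) gives the exact shape of the $K_1$‑slice and (b) says the other slices only shrink, so the slice of the Minkowski sum is squeezed between the two and must equal the "expected" body.

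Granting that, the main computation is:
\begin{align*}
  \mathrm{vol}_d(t_1K_1+\dots+t_dK_d)
  = \int_0^{t_1 r}\mathrm{vol}_{d-1}\!\Big(\big(1-\tfrac{\tau}{t_1 r}\big)t_1K_{1,0}+t_2K_{2,0}+\dots+t_dK_{d,0}\Big)\,d\tau,
\end{align*}
and the substitution $\tau = t_1 r(1-s)$ turns the right‑hand side into $t_1 r\int_0^1 \mathrm{vol}_{d-1}\big(s\,t_1K_{1,0}+t_2K_{2,0}+\dots+t_dK_{d,0}\big)\,ds$. Now I expand the $(d-1)$‑dimensional volume inside as a polynomial in $s t_1,t_2,\dots,t_d$ via the definition \eqref{equ_def_mv} of mixed volumes in $\mathbb R^{d-1}$, integrate the resulting powers of $s$ over $[0,1]$, and compare with the expansion \eqref{equ_def_mv} of $\mathrm{vol}_d(t_1K_1+\dots+t_dK_d)$ in the $t_i$. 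Extracting the coefficient of $t_1t_2\cdots t_d$ on both sides: on the left it is $d!\cdot\tfrac{1}{d!}$ times $V(K_1,\dots,K_d)$ after accounting for the multinomial count — more precisely the coefficient of $t_1\cdots t_d$ in \eqref{equ_def_mv} is $d!\,V(K_1,\dots,K_d)$ divided by the number of orderings, i.e. it equals $d!\,V(K_1,\dots,K_d)$ with the convention that each distinct monomial collects all $d!$ permutations; on the right the $s$‑integral of the term linear in each of $st_1,t_2,\dots,t_d$ contributes $\int_0^1 s\,ds=\tfrac12$ against the coefficient $(d-1)!\,V(K_{2,0},\dots,K_{d,0})$ of $t_2\cdots t_d$ inside, multiplied by the extra factor $t_1 r$ out front. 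Matching these gives $d!\,V(K_1,\dots,K_d) = r\cdot (d-1)!\,V(K_{2,0},\dots,K_{d,0})\cdot\tfrac{?}{?}$; tracking the combinatorial constant carefully yields exactly $\tfrac{d}{r}V(K_1,\dots,K_d)=V(K_{2,0},\dots,K_{d,0})$.

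I expect the main obstacle to be the first step — rigorously justifying that the slice of the Minkowski sum $(t_1K_1+\dots+t_dK_d)_\tau$ is exactly $\big(1-\tfrac{\tau}{t_1 r}\big)t_1K_{1,0}+t_2K_{2,0}+\dots+t_dK_{d,0}$ rather than merely containing or being contained in it. The inclusion "$\supseteq$" is immediate from (a) (take the slice of $t_1K_1$ at the right height and of each $t_jK_j$ at height $0$), but "$\subseteq$" requires that any point of the sum sitting at height $\tau$ can be split so that the $K_1$‑part absorbs all the height; this is where hypothesis (b) is essential, since it lets one "push" any height appearing in the $K_j$‑components ($j\ge2$) back onto the $K_1$‑component at the cost of only shrinking, using convexity of the slice function. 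I would handle this by an extreme‑point / support‑function argument, or alternatively by proving the identity first for polytopes (where the slice decomposition is finite and combinatorial) and then passing to general convex bodies by approximation, using continuity of mixed volumes. The remaining steps — the change of variables, the polynomial expansion, and the coefficient bookkeeping — are routine once the slice identity is in hand.
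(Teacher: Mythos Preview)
Your approach is essentially the same as the paper's: compute $\mathrm{vol}(t_1K_1+\dots+t_dK_d)$ by integrating the $(d-1)$-dimensional volume of the slices at height $\tau$, identify those slices using (a) and (b), and compare the coefficients of $t_1\cdots t_d$. The paper also proves the slice identity directly, by first extending (b) to the sum $t_2K_2+\dots+t_dK_d$ and then checking both inclusions; your suggestion to do it by support functions or polytope approximation would work too, but the direct argument is short.

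There is, however, a genuine gap in your displayed ``main computation''
\[
\mathrm{vol}_d(t_1K_1+\dots+t_dK_d)=\int_0^{t_1 r}\mathrm{vol}_{d-1}\!\Big(\big(1-\tfrac{\tau}{t_1 r}\big)t_1K_{1,0}+t_2K_{2,0}+\dots+t_dK_{d,0}\Big)\,d\tau.
\]
This equality is false: the bodies $K_2,\dots,K_d$ may themselves have positive first coordinate, so $(t_1K_1+\dots+t_dK_d)_\tau$ is in general nonempty for $\tau>t_1r$. The paper splits the integral at $\tau=t_1r$ and shows, using (b), that for $\tau\ge t_1r$ the slice equals $(t_2K_2+\dots+t_dK_d)_{\tau-t_1r}$; integrating this tail gives exactly $\mathrm{vol}(t_2K_2+\dots+t_dK_d)$, which carries no $t_1\cdots t_d$ term and so does not affect the final coefficient comparison. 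You must account for this tail (and justify its form via (b)) before the coefficient matching is legitimate.

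A second, smaller point: your coefficient bookkeeping is off. After your substitution you need the coefficient of $t_2\cdots t_d$ in the $(d-1)$-dimensional expansion (the extra $t_1$ is already supplied by the prefactor $t_1r$), and that monomial involves no $s$, so the relevant integral is $\int_0^1 1\,ds=1$, not $\int_0^1 s\,ds=\tfrac12$. With this correction one gets $d!\,V(K_1,\dots,K_d)=r(d-1)!\,V(K_{2,0},\dots,K_{d,0})$ directly, which is the desired identity.
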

\begin{proof}
The idea is to compute the volume of $t_1K_1+\dots+t_dK_d$ slice-wise and to compare the coefficients of $t_1\cdots t_d$. In what follows, we assume that $t_i>0$ for all $i\in\{1,\dots,d\}$. By the definition of the mixed volume in Equation (\ref{equ_def_mv}) we have
\begin{align}\label{equ_computation-integral}
	&\sum_{i_1,\dots,i_d=1}^d t_{i_1}\cdots t_{i_d} V(K_{i_1},\dots, K_{i_d})=\int_{0}^\infty \mathrm{vol}_{\mathbb{R}^{d-1}}((t_1 K_1+\dots+t_dK_d)\cap H_\tau)d\tau.
\end{align}	
We can decompose the slice of the Minkowski sum by
$$(t_1 K_1+\dots+t_dK_d)\cap H_\tau=\bigcup_{\tau'\in [0,\tau]} \left((t_1 K_1)\cap H_{\tau'}+(t_2K_2+\dots+t_dK_d)\cap H_{\tau-\tau'}\right).$$
By assumption (a) we have
$$(t_1K_1)\cap H_{\tau'}=\{\tau'\}\times (t_1K_1)_{\tau'}=\{\tau'\}\times t_1 K_{1,\frac{\tau'}{t_1}}=\{\tau'\}\times \left(t_1-\tfrac{\tau'}{r}\right)K_{1,0}$$
for all $\tau'\in [0,rt_1]$, where the case $\tau'=rt_1$ follows by continuity.
Next we show a variant of assumption (b) for the Minkowski sum $t_2K_2+\dots+t_dK_d$:
\begin{align*}
	&\tfrac{\tau}{r}K_{1,0}+(t_2K_2+\dots+t_dK_d)_\tau\\
	&=\tfrac{\tau}{r}K_{1,0}+\bigcup_{\tau_2+\dots+\tau_d=\tau}\left(t_2K_{2,\frac{\tau_2}{t_2}}+\dots+t_dK_{d,\frac{\tau_d}{t_d}}\right)\nonumber\\
	&=\bigcup_{\tau_2+\dots+\tau_d=\tau}\left(	\tfrac{\tau}{r}K_{1,0}+t_2K_{2,\frac{\tau_2}{t_2}}+\dots+t_dK_{d,\frac{\tau_d}{t_d}}\right)\nonumber\\
	&=\bigcup_{\tau_2+\dots+\tau_d=\tau}\left(t_2\left(\tfrac{\tau_2}{t_2r}K_{1,0}+K_{2,\frac{\tau_2}{t_2}}\right)+\dots+t_d\left(\tfrac{\tau_d}{t_dr}K_{1,0}+K_{d,\frac{\tau_d}{t_d}}\right)\right)\nonumber\\
	&\subseteq \bigcup_{\tau_2+\dots+\tau_d=\tau}(t_2K_{2,0}+\dots+t_dK_{d,0})=t_2K_{2,0}+\dots+t_dK_{d,0}, \nonumber
\end{align*}
where all $\tau_j$ are non-negative. Moreover, one checks that the subset $K\subseteq (\mathbb{R}_{\ge 0})^d$ with the slices $K_\tau=\frac{\tau}{r}K_{1,0}+(t_2K_2+\dots+t_dK_d)_\tau$ is a convex body, where the slice at $\tau=0$ contains every other slice. Thus,
\begin{align}\label{equ_inclusion}
\tfrac{\tau_1}{r}K_{1,0}+(t_2K_2+\dots+t_dK_d)_{\tau_1}\subseteq \tfrac{\tau_2}{r}K_{1,0}+(t_2K_2+\dots+t_dK_d)_{\tau_2}
\end{align}
if $\tau_1\ge \tau_2\ge 0$.

We deduce for $\tau<rt_1$ that
\begin{align*}
	&(t_1 K_1)\cap H_{\tau'}+(t_2K_2+\dots+t_dK_d)\cap H_{\tau-\tau'}\\
	&=\{\tau'\}\times \left(t_1-\tfrac{\tau'}{r}\right)K_{1,0}+\{\tau-\tau'\}\times (t_2K_2+\dots+t_dK_d)_{\tau-\tau'}\\
	&=\{\tau\}\times \left( \left(t_1-\tfrac{\tau'}{r}\right)K_{1,0}+(t_2K_2+\dots+t_dK_d)_{\tau-\tau'}\right)\\
	&\subseteq\{\tau\}\times \left(\left(t_1-\tfrac{\tau}{r}\right)K_{1,0}+t_2K_{2,0}+\dots+t_dK_{d,0}\right)
\end{align*}
for all $\tau'\in [0,\tau]$ with equality if $\tau'=\tau$. For $\tau\ge rt_1$ we similarly get by the inclusion in (\ref{equ_inclusion}) and by the cancellation law for Minkowski sums that
\begin{align*}
	&(t_1 K_1)\cap H_{\tau'}+(t_2K_2+\dots+t_dK_d)\cap H_{\tau-\tau'}\\
	&=\{\tau\}\times \left( \left(t_1-\tfrac{\tau'}{r}\right)K_{1,0}+(t_2K_2+\dots+t_dK_d)_{\tau-\tau'}\right)\\
	&\subseteq \{\tau\} \times\left((t_2K_2+\dots+t_dK_d)_{\tau-t_1r}\right)
\end{align*}
for all $\tau'\in[0,rt_1]$ with equality if $\tau'=rt_1$. Note that for $\tau'>rt_1$ the Minkowski sum in the first line is the empty set.

We can now apply the above observations to the computation of the integral in Equation (\ref{equ_computation-integral}). This yields
\begin{align*}
	&\sum_{i_1,\dots,i_d=1}^dt_{i_1}\cdots t_{i_d} V(K_{i_1},\dots, K_{i_d})\\
	&=\int_0^{rt_1}\mathrm{vol}\left(\left(t_1-\tfrac{\tau}{r}\right)K_{1,0}+t_2K_{2,0}+\dots+t_dK_{d,0}\right)d\tau\\
	&\quad+\int_{rt_1}^\infty \mathrm{vol}\left((t_2K_2+\dots+t_dK_d)_{\tau-t_1r}\right)d\tau\\
	&=\int_0^{rt_1}\sum_{i_1,\dots,i_{d-1}=1}^d\widetilde{t}_{i_1}\cdots \widetilde{t}_{i_{d-1}}V(K_{i_1,0},\dots,K_{i_{d-1},0})d\tau\\
	&\quad +\int_0^\infty  \mathrm{vol}\left((t_2K_2+\dots+t_dK_d)_{\tau}\right)d\tau,
\end{align*}
where $\widetilde{t}_1=t_1-\frac{\tau}{r}$ and $\widetilde{t}_i=t_i$ for $2\le i\le d$. We only get a $(t_1\cdots t_d)$-term in the last expression if we consider the first integral and the summands with $\{i_1,\dots, i_{d-1}\}=\{2,\dots,d\}$. Thus,
$$d! V(K_1,\dots,K_d)=r(d-1)!V(K_{2,0},\dots, K_{d,0}).$$
Dividing by $r(d-1)!$ on both sides gives the statement of the lemma.
\end{proof}

\section{Construction of the flag}\label{sec_existence}
In this section, we show the existence of an admissible flag $Y_\bullet$ satisfying the properties (i)-(iii) in Theorem \ref{thm}. For simplicity, we set
$$L'_j=\begin{cases} L_j&\text{if } 1\le j\le k,\\ L&\text{if } k+1\le j\le d-1.\end{cases}$$
We proceed as in \cite[Section 4]{Wil25}. We show by induction that for every $0\le j\le d-1$ there is a flag of irreducible subvarieties $X=Y_0\supsetneq Y_1\supsetneq\dots\supsetneq Y_j$ such that for all $0\le i\le j-1$ the following hold
\begin{itemize}
	\item $Y_{i+1}$ is an irreducible Cartier divisor on $Y_{i}$,
	\item $Y_{i+1}\cap Y_{i,\mathrm{reg}}\subseteq Y_{i+1,\mathrm{reg}}$,
	\item there is an $r_{i+1}\in\mathbb{Q}$ with $r_{i+1}\mathcal{O}_{Y_{i}}(Y_{i+1})\cong L'_{i+1}|_{Y_{i}}$ as $\mathbb{Q}$-line bundles, and
	\item $Y_j\cap X_{\mathrm{reg}}\neq \emptyset$.	
\end{itemize}
This is trivial for $j=0$, and we may assume that we have constructed such a flag for some $0\le j\le d-2$. We must show that we can extend it by choosing $Y_{j+1}$ so that the properties hold for $i=j$ and $Y_{j+1}\cap X_{\mathrm{reg}}\neq \emptyset$.

Since $L'_{j+1}$ is ample, $L'_{j+1}|_{Y_j}$ is ample, too. Thus, we can choose a positive integer $\widetilde{r}_{j+1}\in \mathbb{Z}_{\ge 1}$ such that $\widetilde{r}_{j+1}L'_{j+1}|_{Y_j}$ is very ample, and hence it induces an embedding $Y_{j}\to \mathbb{P}^N$. Since $\dim Y_{j}=d-j\ge 2$, we can use the following two versions of Bertini's theorem: By \cite[Theorem 1.1]{FL81} any general
hyperplane $H\subseteq \mathbb{P}^{N}$ intersects $Y_j$ in an irreducible Cartier divisor on $Y_j$. By \cite[Corollary 2]{CGM86} we have
$(H\cap Y_j)_{\mathrm{reg}}\supseteq Y_{j,\mathrm{reg}}\cap H$ for any general hyperplane $H\subseteq\mathbb{P}^{N}$. Since every hyperplane $H\subseteq \mathbb{P}^{N}$ intersects $Y_j$, any general hyperplane $H\subseteq \mathbb{P}^{N}$ intersects the dense open subset $Y_j\cap X_{\mathrm{reg}}$ of $Y_j$.
Hence, there is a hyperplane $H_1\subseteq \mathbb{P}^{N}$ such
that $Y_{j+1}=H_1\cap Y_j$ is an irreducible Cartier divisor on $Y_j$, every point of $Y_{j+1}$ that is regular on $Y_j$ is also regular on $Y_{j+1}$, and $Y_{j+1}\cap X_{\mathrm{reg}}\neq \emptyset$. By construction we have $r_{j+1}\mathcal{O}_{Y_j}(Y_{j+1})\cong L'_{j+1}|_{Y_j}$ for $r_{j+1}=\widetilde{r}_{j+1}^{-1}\in \mathbb{Q}$. Thus, we have proved the induction hypothesis. 	

To complete the flag $Y_0\supsetneq Y_1\supsetneq\dots\supsetneq Y_{d-1}$, we set $Y_d=\{p\}$ for any closed point $p\in Y_{d-1}\cap X_{\mathrm{reg}}$. By construction, we have $p\in Y_{j,\mathrm{reg}}$ for all $0\le j\le d-1$. Hence, $Y_{\bullet}$ is an admissible flag satisfying properties (i)-(iii) in Theorem \ref{thm} with $r_j=\widetilde{r}_{j}^{-1}$.

\section{The Newton--Okounkov bodies in this paper}\label{sec_situation}
The purpose of this section is to show that the Newton--Okounkov bodies in Theorem \ref{thm} satisfy conditions (a) and (b) of Lemma \ref{lem_mixed-volumes}.
\begin{Lem}\label{lem_NO-bodies}
	Let $L_1,\dots,L_d$ be ample $\mathbb{R}$-line bundles on $X$ and $Y_\bullet$ an admissible flag on $X$ satisfying the corresponding properties (i)-(iii) in Theorem \ref{thm} with $k\ge 1$. Then it holds
	\begin{enumerate}[(a)]
		\item $\Delta_{Y_\bullet}(L_1)_{\nu_1=\tau}=\left(1-\frac{\tau}{r_1}\right)\Delta_{Y_\bullet}(L_1)_{\nu_1=0}$ for all $\tau\in [0,r_1)$ and
		\item $\frac{\tau}{r_1}\Delta_{Y_\bullet}(L_1)_{\nu_1=0}+\Delta_{Y_\bullet}(L_j)_{\nu_1=\tau}\subseteq \Delta_{Y_\bullet}(L_j)_{\nu_1=0}$ for all $2\le j\le d$ and all $\tau\ge 0$.
	\end{enumerate}
\end{Lem}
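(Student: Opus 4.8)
The plan is to derive both assertions from the slice formula (\ref{equ_slice-formula}) together with the numerical-equivalence hypotheses (i)--(iii). Write $D=\mathcal{O}_X(Y_1)$ for the Cartier divisor $Y_1\subseteq X$. The central computation is to identify $\mu(L_j;Y_1)$ and the line bundle $L_j-\tau D$ occurring in the slice formula. For part (a), hypothesis (ii) with $j=1$ gives $r_1 D\equiv L_1|_{Y_1}$, which I would like to promote to the statement $L_1-\tfrac{\tau}{r_1}(L_1|_{?})$... rather, the cleaner route: since $r_1 D\equiv L_1$ after restricting to $Y_1$, the bundle $L_1-\tfrac{\tau}{r_1}D$ restricts on $Y_1$ to $(1-\tfrac{\tau}{r_1})L_1|_{Y_1}$, which is ample for $\tau<r_1$ and trivial-numerically at $\tau=r_1$; this forces $\mu(L_1;Y_1)=r_1$ (one also needs that $L_1-sD$ is big for $s<r_1$, which follows since its restriction to $Y_1$ stays ample so it stays big on $X$ — here I would invoke that a $\mathbb{Q}$-line bundle whose restriction to an ample divisor is ample, together with ampleness of $L_1$ itself for the base of the interval, is big; more carefully, ampleness of $L_1$ gives bigness near $s=0$ and the interval of bigness is the full $[0,\mu)$ by definition). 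Then the slice formula at level $\tau$ reads
$$\Delta_{Y_\bullet}(L_1)_{\nu_1=\tau}=\Delta_{Y_\bullet|Y_1}\!\left(L_1-\tfrac{\tau}{r_1}D\right)=\Delta_{Y_{1,\bullet}}\!\left((L_1-\tfrac{\tau}{r_1}D)|_{Y_1}\right),$$
using (\ref{equ_restriction-ample}) since the restricted bundle is ample. Because $(L_1-\tfrac{\tau}{r_1}D)|_{Y_1}\equiv (1-\tfrac{\tau}{r_1})\,(L_1-0\cdot D)|_{Y_1}$ as numerically equivalent ample $\mathbb{Q}$-line bundles, their Newton--Okounkov bodies agree by Jow's theorem (cited in the introduction), and homogeneity (\ref{equ_scalar}) extended to $\mathbb{R}$-scalars gives $\Delta_{Y_{1,\bullet}}((1-\tfrac{\tau}{r_1})L_1|_{Y_1})=(1-\tfrac{\tau}{r_1})\Delta_{Y_{1,\bullet}}(L_1|_{Y_1})=(1-\tfrac{\tau}{r_1})\Delta_{Y_\bullet}(L_1)_{\nu_1=0}$, the last equality being the slice formula at $\tau=0$. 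This is (a).

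For part (b), fix $2\le j\le d$ and $\tau\ge0$; I may assume $\tau<\mu(L_j;Y_1)$, as otherwise the left slice is contained in the whole of $\Delta_{Y_\bullet}(L_j)_{\nu_1=0}$ trivially by emptiness of $\Delta_{Y_\bullet}(L_j)_{\nu_1=\tau}$. The slice formula gives $\Delta_{Y_\bullet}(L_j)_{\nu_1=\tau}=\Delta_{Y_\bullet|Y_1}(L_j-\tau D)$, and likewise $\Delta_{Y_\bullet}(L_j)_{\nu_1=0}=\Delta_{Y_\bullet|Y_1}(L_j)$, while $\tfrac{\tau}{r_1}\Delta_{Y_\bullet}(L_1)_{\nu_1=0}=\tfrac{\tau}{r_1}\Delta_{Y_{1,\bullet}}(L_1|_{Y_1})=\Delta_{Y_{1,\bullet}}(\tfrac{\tau}{r_1}L_1|_{Y_1})$ by homogeneity. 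Now $\tfrac{\tau}{r_1}L_1|_{Y_1}\equiv \tau D|_{Y_1}$ by (ii), so using Jow again this equals $\Delta_{Y_{1,\bullet}}(\tau D|_{Y_1})$. Hence the desired inclusion is exactly
$$\Delta_{Y_{1,\bullet}}(\tau D|_{Y_1})+\Delta_{Y_{1,\bullet}}\!\left((L_j-\tau D)|_{Y_1}\right)\subseteq \Delta_{Y_{1,\bullet}}(L_j|_{Y_1}),$$
which is precisely the Minkowski-sum subadditivity (\ref{equ_subadditivity}) applied on $Y_1$ to the two big (indeed the second is ample for $\tau<\mu$, and the first is nef hence big or a point — if $\tau D|_{Y_1}$ is not big one replaces it by a limiting argument, but since $D|_{Y_1}$ pulls back an ample bundle via $\iota_1$ by the flag construction it is in fact ample) $\mathbb{Q}$-line bundles $\tau D|_{Y_1}$ and $(L_j-\tau D)|_{Y_1}$ whose tensor product is $L_j|_{Y_1}$. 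Note (\ref{equ_subadditivity}) is stated for $\mathbb{Z}$-line bundles but extends to $\mathbb{Q}$-line bundles by clearing denominators and using (\ref{equ_scalar}).

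The main obstacle is the careful bookkeeping around $\mu(L_j;Y_1)$ and the bigness hypotheses needed to legally invoke the slice formula and equation (\ref{equ_restriction-ample}): I must check that $L_j-\tau D$ is big on $X$ for $\tau$ in the relevant range and that its restriction to $Y_1$ is ample (so that (\ref{equ_restriction-ample}) upgrades the restricted NO body to an honest one), and separately confirm $\tau\ne\mu(L_j;Y_1)$ can be excluded or handled. I expect this to follow from the flag construction in Section \ref{sec_existence}, where $D|_{Y_1}=\mathcal{O}_{Y_1}(Y_2)$ (via (i)) and the numerical relations (ii)/(iii) tie everything to the ample bundles $L_i$ or $L$; the one genuinely delicate point is passing from numerical equivalence of $\mathbb{Q}$-line bundles to equality of Newton--Okounkov bodies, which is exactly Jow's theorem as recalled in the introduction, so it is available. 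A secondary subtlety is that part (b) is claimed for \emph{all} $\tau\ge0$ including $\tau\ge\mu(L_j;Y_1)$ and $\tau\ge r_1$; in those regimes either $\Delta_{Y_\bullet}(L_j)_{\nu_1=\tau}=\emptyset$ or $\tfrac{\tau}{r_1}\Delta_{Y_\bullet}(L_1)_{\nu_1=0}$ must be reinterpreted — but since condition (b) of Lemma \ref{lem_mixed-volumes} only needs the stated inclusion and empty sets are contained in everything, and since the bodies live in $(\mathbb{R}_{\ge0})^d$ with $\Delta_{Y_\bullet}(L_1)_{\nu_1=0}$ bounded, the edge cases cause no problem once the generic case is settled.
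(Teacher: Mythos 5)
Your approach is the same as the paper's: slice formula, numerical equivalence from hypothesis (ii), homogeneity of Newton--Okounkov bodies, the restriction identity (\ref{equ_restriction-ample}), and subadditivity (\ref{equ_subadditivity}). There are two small slips worth fixing, of which the second is the substantive one.

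First, minor notational issues: hypothesis (ii) with $j=1$ reads $r_1\mathcal{O}_{Y_0}(Y_1)\equiv L_1|_{Y_0}$, i.e.\ $r_1 D\equiv L_1$ as $\mathbb{Q}$-line bundles on $X$ itself (not ``$L_1|_{Y_1}$''), and the slice formula produces $\Delta_{Y_\bullet|Y_1}(L_1-\tau D)$, not $\Delta_{Y_\bullet|Y_1}(L_1-\tfrac{\tau}{r_1}D)$ --- the factor $\tfrac{1}{r_1}$ only appears after replacing $D$ by $\tfrac{1}{r_1}L_1$ via the numerical equivalence.

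Second, and more importantly for part (b): you assert that $L_j-\tau D$ is ample for all $\tau<\mu(L_j;Y_1)$ and use that to upgrade $\Delta_{Y_\bullet|Y_1}(L_j-\tau D)$ to $\Delta_{Y_{1,\bullet}}((L_j-\tau D)|_{Y_1})$ by (\ref{equ_restriction-ample}). This is not correct: $\mu(L_j;Y_1)$ is the bigness threshold, not the ampleness threshold, and for $j\neq 1$ the class $L_j-\tau D\equiv L_j-\tfrac{\tau}{r_1}L_1$ typically leaves the ample cone well before it leaves the big cone. Consequently your claim that the desired inclusion is ``exactly'' a subadditivity statement between honest Newton--Okounkov bodies on $Y_1$ is too strong. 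What rescues the argument --- and what the paper does --- is that one never needs equality here, only the one-sided inclusion $\Delta_{Y_\bullet|Y_1}(N)\subseteq\Delta_{Y_{1,\bullet}}(N|_{Y_1})$, which holds for \emph{any} line bundle $N$ on $X$ directly from the definitions (the image of the restriction map is a subspace of all sections on $Y_1$). With that inclusion in hand your chain becomes
\begin{align*}
\tfrac{\tau}{r_1}\Delta_{Y_\bullet}(L_1)_{\nu_1=0}+\Delta_{Y_\bullet}(L_j)_{\nu_1=\tau}
&=\Delta_{Y_{1,\bullet}}(\tau D|_{Y_1})+\Delta_{Y_\bullet|Y_1}(L_j-\tau D)\\
&\subseteq\Delta_{Y_{1,\bullet}}(\tau D|_{Y_1})+\Delta_{Y_{1,\bullet}}\bigl((L_j-\tau D)|_{Y_1}\bigr)
\subseteq\Delta_{Y_{1,\bullet}}(L_j|_{Y_1}),
\end{align*}
which is the paper's proof. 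Your treatment of the $\tau\ge\mu(L_j;Y_1)$ and $\tau=\mu(L_j;Y_1)$ edge cases via emptiness and closedness is a reasonable way to complete the argument, and your use of numerical invariance of Newton--Okounkov bodies is correct, though that fact is \cite[Proposition 4.1(i)]{LM09} rather than Jow's theorem (which is the converse implication).
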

\begin{proof}
	\begin{enumerate}[(a)]
		\item By the numerical equivalence $L_1\equiv r_1\mathcal{O}_X(Y_1)$ we have $\mu(L_1;Y_1)=r_1$. Thus, for $\tau\in [0, r_1)$ the slice formula (\ref{equ_slice-formula}) implies
		\begin{align}\label{equ_computation-a}
		\Delta_{Y_\bullet}(L_1)_{\nu_1=\tau}=\Delta_{Y_\bullet|Y_1}(L_1-\tau\mathcal{O}_X(Y_1))=\Delta_{Y_{1,\bullet}}\left(\left(1-\tfrac{\tau}{r_1}\right)L_1|_{Y_1}\right),
		\end{align}
		where the second equality follows by Equation (\ref{equ_restriction-ample}) and the numerical equivalence $L_1-\tau\mathcal{O}_X(Y_1)\equiv \left(1-\frac{\tau}{r_1}\right) L_1$. Using Equation (\ref{equ_scalar}) we deduce
		\begin{align*}
		\Delta_{Y_{1,\bullet}}\left(\left(1-\tfrac{\tau}{r_1}\right)L_1|_{Y_1}\right)&=\left(1-\tfrac{\tau}{r_1}\right)\Delta_{Y_{1,\bullet}}\left(L_1|_{Y_1}\right)=\left(1-\tfrac{\tau}{r_1}\right)\Delta_{Y_{\bullet}}\left(L_1\right)_{\nu_1=0},
		\end{align*}
		where the second equality follows from Equation (\ref{equ_computation-a}) applied to $\tau=0$. This proves the first part.
		\item As $L_1\equiv r_1\mathcal{O}_X(Y_1)$, we can compute by the slice formula (\ref{equ_slice-formula}), Equation (\ref{equ_scalar}), and Equations (\ref{equ_subadditivity}) and (\ref{equ_restriction-ample}) for $\tau\neq \mu(L_j;Y_1)$ that
		\begin{align*}
			\tfrac{\tau}{r_1}\Delta_{Y_\bullet}(L_1)_{\nu_1=0}+\Delta_{Y_\bullet}(L_j)_{\nu_1=\tau}&=\Delta_{Y_{1,\bullet}}(\tau\mathcal{O}_X(Y_1)|_{Y_1})+\Delta_{Y_\bullet|Y_1}(L_j-\tau\mathcal{O}_X(Y_1))\\
			&\subseteq\Delta_{Y_{1,\bullet}}(\tau\mathcal{O}_X(Y_1)|_{Y_1})+\Delta_{Y_{1,\bullet}}((L_j-\tau\mathcal{O}_X(Y_1))|_{Y_1})\\
			&\subseteq\Delta_{Y_{1,\bullet}}(L_j|_{Y_1})=\Delta_{Y_{\bullet}}(L_j)_{\nu_1=0}. 
		\end{align*}
	For the first inclusion, note that we always have $\Delta_{Y_\bullet|Y_1}(L)\subseteq \Delta_{Y_{1,\bullet}}(L|_{Y_1})$ for any line bundle $L$ on $X$. By continuity of Newton--Okounkov bodies, the inclusion also holds for $\tau=\mu(L_j;Y_1)$. This completes the proof of the lemma.\qedhere
	\end{enumerate}
\end{proof}
\section{Proof of Theorem \ref{thm}}
We prove Theorem \ref{thm} by induction on $k$. By Section \ref{sec_existence} we only have to prove the second statement. If $k=0$, we proved in \cite[Theorem 1.2]{Wil25} that the inclusion (\ref{equ_subadditivity}) is indeed an equality $\Delta_{Y_\bullet}(N_1)+\Delta_{Y_\bullet}(N_2)=\Delta_{Y_\bullet}(N_1+N_2)$ if $N_1,N_2\in C_L(M)$. Thus, the theorem follows from the multilinearity of the intersection number and the mixed volume. See also \cite[Section 4]{Wil25}. Hence, we may assume $k\ge 1$. In particular, $d\ge 2$. Applying the induction hypothesis to $Y_1$ we get
\begin{align*}
(L_1\cdot\ldots\cdot L_d)&=r_1(L_2|_{Y_1}\cdot\ldots\cdot L_d|_{Y_1})\\
&=r_1(d-1)!V(\Delta_{Y_{1,\bullet}}(L_2|_{Y_1}),\dots,\Delta_{Y_{1,\bullet}}(L_d|_{Y_1})).
\end{align*}
Since $L_2,\dots,L_d$ are ample, we get by Equation (\ref{equ_restriction-ample}) that
$$V(\Delta_{Y_{1,\bullet}}(L_2|_{Y_1}),\dots,\Delta_{Y_{1,\bullet}}(L_d|_{Y_1}))=V(\Delta_{Y_\bullet|Y_1}(L_2),\dots,\Delta_{Y_\bullet|Y_1}(L_d)).$$
Using the slice formula (\ref{equ_slice-formula}) we get
$$V(\Delta_{Y_\bullet|Y_1}(L_2),\dots,\Delta_{Y_\bullet|Y_1}(L_d))=V(\Delta_{Y_\bullet}(L_2)_{\nu_1=0},\dots,\Delta_{Y_\bullet}(L_d)_{\nu_1=0}),$$
where we consider the convex bodies in the mixed volume on the right-hand side as convex bodies in $\mathbb{R}^{d-1}\cong \{0\}\times \mathbb{R}^{d-1}$.
By Lemmas \ref{lem_mixed-volumes} and \ref{lem_NO-bodies} we get
$$V(\Delta_{Y_\bullet}(L_2)_{\nu_1=0},\dots,\Delta_{Y_\bullet}(L_d)_{\nu_1=0})=dr_1^{-1}V(\Delta_{Y_\bullet}(L_1),\dots,\Delta_{Y_\bullet}(L_d)).$$
Thus, we conclude
$$(L_1\cdot\ldots\cdot L_d)=d!V(\Delta_{Y_\bullet}(L_1),\dots,\Delta_{Y_\bullet}(L_d))$$
as stated in the theorem.

\section{Proof of Corollary \ref{cor}}
We prove Corollary \ref{cor} in this section. By continuity, we may assume that $L,L_1,\dots,L_d$ are ample $\mathbb{Q}$-line bundles. It has been shown by Saroglou--Soprunov--Zvavitch \cite[Equation 6.1]{SSZ19} that
$$V(K_1,\dots,K_d)\mathrm{vol}(K)\le d V(K,\dots,K,K_d)V(K_1,\dots, K_{d-1},K)$$
for all convex bodies $K,K_1,\dots, K_d$ in $\mathbb{R}^d$. If $Y_\bullet$ is an admissible flag on $X$ satisfying (i)-(iii) of Theorem \ref{thm}, we get by Theorem \ref{thm} for $k=d-1$ and by Equation (\ref{equ_volume})
\begin{align*}(L_1\cdot\ldots\cdot L_d)\cdot (L^d)&=(d!)^2V(\Delta_{Y_\bullet}(L_1),\dots,\Delta_{Y_\bullet}(L_d))\cdot \mathrm{vol}(\Delta_{Y_\bullet}(L))\\
	&\le d(d!)^2 V(\Delta_{Y_\bullet}(L),\dots,\Delta_{Y_\bullet}(L),\Delta_{Y_\bullet}(L_d))\\
	&\quad \times V(\Delta_{Y_\bullet}(L_1),\dots,\Delta_{Y_\bullet}(L_{d-1}),\Delta_{Y_\bullet}(L))\\
	&\le d (L^{d-1}\cdot L_d)\cdot (L_1\cdot\ldots\cdot L_{d-1}\cdot L).
\end{align*}
For the last inequality, we used Theorem \ref{thm} together with the inequality
$$d!\cdot V(\Delta_{Y_\bullet}(M_1),\dots,\Delta_{Y_\bullet}(M_1),\Delta_{Y_\bullet}(M_2))\le (M_1^{d-1}\cdot M_2),$$
which we proved for any admissible flag $Y_\bullet$ and any ample line bundles $M_1, M_2$ on $X$ in \cite[Lemma 6.1]{Wil25}. 
\section*{Acknowledgments}
I would like to thank François Ballaÿ and Mingchen Xia for useful discussions.

\end{document}